\newtheorem{theorem}{Theorem}[section]
\newtheorem{lemma}[theorem]{Lemma}
\theoremstyle{definition}
\theoremstyle{remark}
\newtheorem{remark}[theorem]{Remark}
\numberwithin{equation}{section}
\begin{document}

% \title[short text for running head]{full title}
\title[On the solvability of BVP for linear DAE]{On the solvability of boundary value problems for linear differential-algebraic equations with constant coefficients}

\author{Anar Assanova}
\address{Institute of Mathematics and Mathematical Modeling, Pushkin Str. 125, 050010, Almaty, Kazakhstan}
\curraddr{}
\email{assanova@math.kz}
\thanks{
  \includegraphics[height=7.0mm]{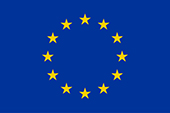} This paper is supported by European Union’s Horizon 2020 research and innovation programme under the Marie Sklodowska-Curie grant agreement ID: 873071, project SOMPATY (Spectral Optimization: From Mathematics to Physics and Advanced Technology)
}
%    author two information
\author{Carsten Trunk}
\address{Institut für Mathematik, Technische Universität Ilmenau, Postfach 100565, D-98684, Ilmenau, Germany}
\curraddr{}
\email{carsten.trunk@tu-ilmenau.de}
\author{Roza Uteshova}
\address{Institute of Mathematics and Mathematical Modeling, Pushkin Str. 125, 050010, Almaty, Kazakhstan}
\curraddr{}
\email{r.uteshova@iitu.edu.kz}
\thanks{}

%    The 2020 edition of the Mathematics Subject Classification is
%    the current definitive version.
\subjclass[2020]{Primary 34A09, 34B05; Secondary 34B99, 15A22}
\date{}
\keywords{Differential-algebraic equation, boundary value problem, Weierstrass canonical form, parameterization method}
\begin{abstract}
We study a two-point boundary value problem for a linear differen\-tial-algebraic equation with constant coefficients by using the method of parameterization. The parameter is set as the value of the continuously differentiable component of the solution at the left endpoint of the interval. Applying the Weierstrass canonical form to the matrix pair associated with the differential-algebraic equation, we obtain a criterion for the unique solvability of the problem.
\end{abstract}

\maketitle

%    Text of article.

%    Bibliographies can be prepared with BibTeX using amsplain,
%    amsalpha, or (for "historical" overviews) natbib style.
\section{Introduction}

We consider the linear differential-algebraic equation with constant coefficients of the form
\begin{equation}\label{dae}
E\dot{x}(t)=Ax(t)+f(t), \quad t\in (0,T),    
\end{equation}
subject to the boundary condition
\begin{equation}\label{BC}
Bx(0)+Cx(T)=d.    
\end{equation}
Here $E, A,B,C\in \mathbb{R}^{n\times n}$, $d\in\mathbb{R}^n$, $T>0$. We suppose that the matrix pair $(E,A)$ is regular, i.e. $\text{det}(\lambda E - A)\neq 0$ for some $\lambda\in\mathbb{C}$.

By a solution of the boundary value problem \eqref{dae}, \eqref{BC} we mean a function $x\in C^1([0,T],\mathbb{R}^n)$ satisfying equation \eqref{dae} and the boundary condition \eqref{BC}.

Differential-algebraic equations have become widespread over the last decades, being a tool for modeling and simulation of dynamical systems with constraints in numerous applications \cites{Kunkel:2006,brenan1995numerical,ascher1998computer,lamour2013differential, samoilenko2000linear,boichuk2004generalized,riaza2008differential}. The theory of boundary value problems for differential-algebraic equations started to develop by applying modified versions of the shooting and collocation methods designed for boundary value problems for ordinary differential equations \cites{marz1984difference,clark1989numerical,lamour1991well,lamour1997shooting,BAI1991269,bai1992modified,ascher1992projected,stover2001collocation,kunkel2002symmetric}. In \cite{amodio1997numerical},  P.\ Amodio and F.\ Mazzia  studied problem \eqref{dae}, \eqref{BC} by the method of boundary values. R.\ März applied the method of projectors and methods of perturbation theory \cite{marz1996canonical, marz2004solvability, marz2005characterizing} to problem \eqref{dae}, \eqref{BC}.  The monograph by R.\ Lamour, R.\ März, and C.\ Tischendorf  \cite{lamour2013differential}, devoted to the projector study of differential-algebraic equations, provides a detailed review in this field.

A series of papers by C. Trunk et al. \cite{gernandt2023characteristic, berger2016linear, leben2021finite, berger2021linear} investigate possibilities of generalization and extension of the Kronecker canonical form to differential-algebraic equations with rectangular matrices.  A number of methods and approaches have been developed for constructing their solutions. 

However, the methods developed for solving problem \eqref{dae}, \eqref{BC} may not always be applicable for a wide class of boundary conditions. As a consequence, this requires the development of new methods or modification of known methods of the theory of differential equations, which would be applicable to differential-algebraic equations and would be of constructive nature.

In this paper, we use the method of parameterization proposed by Dzhumabaev \cite{Dzhumabaev:1989}, which has proven to be an efficient constructive method allowing both to derive criteria for the unique solvability and obtain approximate solutions of various classes of boundary value problems \cite{dzhumabaev2010method, dzhumabaev2016one, dzhumabaev2018computational, asanova2013well, assanova2022solution}. This method was originally proposed for solving the linear boundary value problem \eqref{dae}, \eqref{BC} provided $\text{det}E\neq 0.$ In this case, a criterion for the unique solvability was obtained in terms of coefficients and an algorithm for approximate solution was developed.  

Our goal is to apply the method of parameterization  to the boundary value problem \eqref{dae}, \eqref{BC} in the case when the matrix $E$ is not necessarily non-singular. We derive a criterion for the existence of a unique solution under certain assumptions on the matrices of the boundary condition.

\section{Main results}
We introduce the parameter $\mu\in\mathbb{R}^n$ defined as $E\mu:=Ex(0)$. By substituting
\begin{equation*}
    u(t):=x(t)-\mu,
\end{equation*}
  equation \eqref{dae} is transformed into the initial value problem with parameter
\begin{equation}\label{ivp1}
E\dot{u}(t)=A(u(t)+\mu)+f(t),\quad t \in (0,T),
\end{equation}
\begin{equation}\label{ivp2}
Eu(0)=0.
\end{equation}
and the boundary condition \eqref{BC} becomes 
\begin{equation}\label{BC_u}
B(\mu+u(0))+C(\mu+u(T))=d.    
\end{equation}

The following statement shows the equivalence of the boundary value problem \eqref{dae}, \eqref{BC} and the boundary value problem with parameter \eqref{ivp1}-\eqref{BC_u}.

\begin{lemma}
If $x^{\ast}\in C^{1}([0,T],\mathbb{R}^n)$ is a solution of problem \eqref{dae}, \eqref{BC}, then the pair $(\mu^{\ast},u^{\ast})\in \mathbb{R}^n\times C^1([0,T],\mathbb{R}^n)$, where $E\mu^{\ast}=Ex^{\ast}(0)$ and $u^{\ast}(t)=x^{\ast}(t)-\mu^{\ast}$, is a solution of problem \eqref{ivp1}-\eqref{BC_u}. 

Conversely, if a pair $(\mu^{\ast\ast},u^{\ast\ast})\in \mathbb{R}^n\times C^1([0,T],\mathbb{R}^n)$ is a solution of problem \eqref{ivp1}-\eqref{BC_u}, then the function $x^{\ast\ast}\in C^{1}([0,T],\mathbb{R}^n)$, defined by $x^{\ast\ast}(t)=\mu^{\ast\ast}+u^{\ast\ast}(t)$, is a solution of problem \eqref{dae}, \eqref{BC}.
\end{lemma}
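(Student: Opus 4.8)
The plan is to prove the two implications separately, checking in each direction that all three defining relations of the target problem hold. The key observation underlying everything is that the parameter is introduced through the relation $E\mu = Ex(0)$, not $\mu = x(0)$; this is exactly what makes the construction well-posed when $E$ is singular, since $\mu$ need not be uniquely determined by $x(0)$, but the products $E\mu$ and $Ex(0)$ agree.

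For the forward direction, I would start with a solution $x^{\ast}$ of \eqref{dae}, \eqref{BC} and set $\mu^{\ast}$ via $E\mu^{\ast} = Ex^{\ast}(0)$ and $u^{\ast}(t) = x^{\ast}(t) - \mu^{\ast}$. Since $\mu^{\ast}$ is a constant vector, $\dot{u}^{\ast}(t) = \dot{x}^{\ast}(t)$, so $E\dot{u}^{\ast}(t) = E\dot{x}^{\ast}(t) = Ax^{\ast}(t) + f(t) = A(u^{\ast}(t) + \mu^{\ast}) + f(t)$, which is exactly \eqref{ivp1}. For the constraint \eqref{ivp2}, I compute $Eu^{\ast}(0) = Ex^{\ast}(0) - E\mu^{\ast} = 0$ by the defining relation for $\mu^{\ast}$. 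Finally, the boundary condition \eqref{BC_u} follows by substituting $x^{\ast}(0) = \mu^{\ast} + u^{\ast}(0)$ and $x^{\ast}(T) = \mu^{\ast} + u^{\ast}(T)$ into \eqref{BC}. Regularity of $x^{\ast}$ in $C^1$ transfers to $u^{\ast}$ immediately since translation by a constant preserves smoothness.

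For the converse, I would take a solution pair $(\mu^{\ast\ast}, u^{\ast\ast})$ of \eqref{ivp1}--\eqref{BC_u} and set $x^{\ast\ast}(t) = \mu^{\ast\ast} + u^{\ast\ast}(t)$. Again $\dot{x}^{\ast\ast}(t) = \dot{u}^{\ast\ast}(t)$, and plugging into \eqref{ivp1} gives $E\dot{x}^{\ast\ast}(t) = A(u^{\ast\ast}(t) + \mu^{\ast\ast}) + f(t) = Ax^{\ast\ast}(t) + f(t)$, recovering \eqref{dae}. The boundary condition \eqref{BC} is read off directly from \eqref{BC_u} once we group $\mu^{\ast\ast} + u^{\ast\ast}(0) = x^{\ast\ast}(0)$ and $\mu^{\ast\ast} + u^{\ast\ast}(T) = x^{\ast\ast}(T)$. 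Here the constraint \eqref{ivp2} is not needed to recover \eqref{dae}, \eqref{BC}; it is precisely the condition that guarantees consistency of the parameterization, i.e. that $\mu^{\ast\ast}$ can legitimately play the role of a value with $E\mu^{\ast\ast} = Ex^{\ast\ast}(0)$.

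I do not expect a serious obstacle here, as the lemma is an equivalence established by direct substitution; the content is organizational rather than technical. The one point that deserves care is the role of \eqref{ivp2}: I want to verify that it is genuinely consistent in both directions and is not an extra constraint that could break the equivalence. In the forward direction \eqref{ivp2} is forced by the choice of $\mu^{\ast}$, and in the converse direction it is part of the hypothesis, so the correspondence $x \leftrightarrow (\mu, u)$ is a genuine bijection between solution sets. I would close by remarking that the map is well defined precisely because only $E\mu$, and not $\mu$ itself, is pinned down, which is the feature exploited in the singular case.
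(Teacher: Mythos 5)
Your verification is correct and is exactly the direct-substitution argument the lemma calls for; the paper in fact states this lemma without proof, treating precisely this computation (including the observation that $Eu(0)=0$ encodes the consistency relation $E\mu = Ex(0)$) as immediate. One small caveat: your closing claim of a ``genuine bijection'' between solution sets slightly overstates matters when $E$ is singular, since $\mu^{\ast}$ is determined only modulo $\ker E$ and a single $x^{\ast}$ corresponds to many pairs $(\mu^{\ast},u^{\ast})$ --- but the lemma asserts only the two implications, which you have established.
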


Let $P$ and $Q$ be non-singular matrices which transform \eqref{ivp1} and \eqref{ivp2} to Weierstrass canonical form \cite{Kunkel:2006}, i.e.,
\begin{equation}\label{WCF}
    PEQ=\begin{bmatrix}
I_{n_1} & 0\\
0 & N
\end{bmatrix},\quad PAQ=\begin{bmatrix}
J & 0\\
0 & I_{n_2}
\end{bmatrix},\quad Pf=\begin{bmatrix}
\tilde{f}_1\\
\tilde{f}_2
\end{bmatrix},
\end{equation}
where $J$ is an $n_1\times n_1$ matrix in Jordan canonical form and $N$ is an $n_2\times n_2$  nilpotent matrix also in Jordan canonical form; $n_1+n_2=n$. Following \cite{Kunkel:2006}, we call the index of nilpotency of $N$ in \eqref{WCF} the index of the matrix pair $(E,A)$, denoted by
\begin{equation*}
    \nu=\text{ind}(E,A).
\end{equation*}

According to the space decomposition given by \eqref{WCF}, we have the function 
\begin{equation*}
    \tilde{u}(t)=(\tilde{u}_1(t),\tilde{u}_2(t))^T :=Q^{-1}u(t)
\end{equation*}
and the vector 
\begin{equation*}
    \tilde{\mu}=(\tilde{\mu}_1,\tilde{\mu}_2)^T :=Q^{-1}\mu
\end{equation*}
with $\tilde{u}_j\in C^1([0,T],\mathbb{R}^{n_j})$ and  $\tilde{\mu}_j\in \mathbb{R}^{n_j}$, $j=1,2.$

We then can rewrite problem \eqref{ivp1}, \eqref{ivp2} in the following form:
\begin{equation}\label{difeq}
\dot{\tilde{u}}_1(t)=J(\tilde{u}_1(t)+\tilde{\mu}_1)+
\tilde{f}_1(t),
\end{equation}
\begin{equation}\label{difeq_ic}
\tilde{u}_1(0)=0,
\end{equation}
\begin{equation}\label{algeq}
N\dot{\tilde{u}}_2(t)=\tilde{u}_2(t)+\tilde{\mu}_2+
\tilde{f}_2(t),
\end{equation}
\begin{equation}\label{algeq_ic}
N\tilde{u}_2(0)=0.
\end{equation}

If we set $\tilde{B}:=BQ$ and $\tilde{C}:=CQ$, the boundary condition \eqref{BC_u} transforms into 
\begin{equation}\label{BC_trans}
\tilde{B}(\widetilde{\mu}+\widetilde{u}(0))+\tilde{C}(\tilde{\mu}+\tilde{u}(T))=d.    
\end{equation}

By a solution of the boundary value problem \eqref{difeq}-\eqref{BC_trans}  we mean a pair $(\tilde{\mu},\tilde{u})\in \mathbb{R}^n\times C^1([0,T],\mathbb{R}^n)$ with $\tilde{\mu}=(\tilde{\mu}_1,\tilde{\mu}_2)^T$ and $\tilde{u}(t)=(\tilde{u}_1(t),\tilde{u}_2(t))^T$ satisfying the initial value problems \eqref{difeq}, \eqref{difeq_ic} and \eqref{algeq}, \eqref{algeq_ic}, and the boundary condition \eqref{BC_trans}.

Before we start to study problem \eqref{difeq}-\eqref{BC_trans}, let us note that we are able to write down explicitly the solutions of the initial value problems \eqref{difeq},\eqref{difeq_ic} and \eqref{algeq},\eqref{algeq_ic}. 

\begin{lemma}\label{lemma_ivp_de}
    For any $\tilde{\mu}_1\in \mathbb{R}^{n_1}$, the initial value problem for the linear differential equation \eqref{difeq},\eqref{difeq_ic} has the unique solution 
\begin{equation}\label{u_dif}
  \tilde{u}_1(t)=J\int\limits_0^t e^{(t-s)J}ds\tilde{\mu}_1+\int\limits_0^t e^{(t-s)J}\tilde{f}_1(s) ds.
\end{equation}
\end{lemma}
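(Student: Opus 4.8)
The plan is to verify that the proposed formula solves the initial value problem \eqref{difeq}, \eqref{difeq_ic} and to establish uniqueness. Since equation \eqref{difeq} is a linear ordinary differential equation with constant coefficient matrix $J$ and inhomogeneity $J\tilde{\mu}_1+\tilde{f}_1(t)$, standard ODE theory guarantees existence and uniqueness of a $C^1$ solution on $[0,T]$ for the given initial value; so the real task is to confirm that the closed-form expression \eqref{u_dif} is precisely that solution.

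First I would observe that \eqref{difeq} can be written as $\dot{\tilde{u}}_1(t)=J\tilde{u}_1(t)+\bigl(J\tilde{\mu}_1+\tilde{f}_1(t)\bigr)$, which is an inhomogeneous linear system with constant matrix $J$ and forcing term $g(t):=J\tilde{\mu}_1+\tilde{f}_1(t)$. The variation-of-constants formula then gives the unique solution with $\tilde{u}_1(0)=0$ as
\begin{equation*}
\tilde{u}_1(t)=\int_0^t e^{(t-s)J}g(s)\,ds
=\int_0^t e^{(t-s)J}J\tilde{\mu}_1\,ds+\int_0^t e^{(t-s)J}\tilde{f}_1(s)\,ds .
\end{equation*}
Since $\tilde{\mu}_1$ is a constant vector and $J$ commutes with $e^{(t-s)J}$, the first integral equals $J\int_0^t e^{(t-s)J}\,ds\,\tilde{\mu}_1$, which is exactly the first term in \eqref{u_dif}. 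This matches the claimed formula.

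To keep the argument self-contained rather than merely citing variation of constants, I would alternatively verify \eqref{u_dif} directly. Evaluating at $t=0$ gives $\tilde{u}_1(0)=0$, so the initial condition \eqref{difeq_ic} holds. Differentiating \eqref{u_dif} using Leibniz's rule (the integrand and its boundary values are continuous, so this is justified) yields $\dot{\tilde{u}}_1(t)=J\tilde{u}_1(t)+J\tilde{\mu}_1+\tilde{f}_1(t)$, which is precisely \eqref{difeq}. The boundary term from differentiating the first integral contributes $J\tilde{\mu}_1$ (since $e^{0\cdot J}=I$), while differentiating under the integral sign reproduces $J$ applied to the integral itself; the second integral differentiates to give $\tilde{f}_1(t)$ plus $J$ applied to its integral. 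Collecting terms confirms the equation.

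This lemma is essentially routine, so I do not anticipate a genuine obstacle; the only point requiring a little care is the bookkeeping when differentiating the two integrals, making sure the endpoint contributions combine correctly to produce exactly $J\tilde{\mu}_1+\tilde{f}_1(t)$ and not a spurious extra term. Uniqueness then follows from the standard fact that the homogeneous linear system $\dot{v}=Jv$ with $v(0)=0$ has only the trivial solution, so any two $C^1$ solutions of \eqref{difeq}, \eqref{difeq_ic} coincide.
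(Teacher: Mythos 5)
Your proof is correct and follows essentially the same route as the paper: the paper simply quotes the general variation-of-constants solution $\tilde{u}_1(t)=e^{tJ}\tilde{u}_1(0)+J\int_0^t e^{(t-s)J}ds\,\tilde{\mu}_1+\int_0^t e^{(t-s)J}\tilde{f}_1(s)\,ds$ from Hartman and sets $\tilde{u}_1(0)=0$, which is exactly your first argument with the forcing term $g(t)=J\tilde{\mu}_1+\tilde{f}_1(t)$. Your additional direct verification by differentiation and the explicit uniqueness remark are sound but go beyond what the paper's one-line proof records.
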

\begin{proof}
The general solution of the ordinary differential equation \eqref{difeq} has the form \cite[p.\ 58]{hartman2002ordinary}
\begin{equation*}
    \tilde{u}_1(t)= e^{t J} \tilde{u}_1(0)+J\int\limits_0^t e^{(t-s)J}ds\tilde{\mu}_1+\int\limits_0^t e^{(t-s)J}\tilde{f}_1(s) ds.
\end{equation*}
Applying the initial condition \eqref{difeq_ic}, we obtain \eqref{u_dif}.
\end{proof}

By Lemma 2.8 \cite{Kunkel:2006}, equation \eqref{algeq} for fixed $\tilde{\mu}_2$ has the unique solution 
\begin{equation}\label{u_alg}
\tilde{u}_2(t)=-\sum\limits_{i=0}^{\nu-1}N^i[\tilde{\mu}_2+\tilde{f}_2(t)]^{(i)}=-\tilde{\mu}_2-\sum\limits_{i=0}^{\nu-1}N^i\tilde{f}_2^{(i)}(t),    
\end{equation}
 without specifying any initial values. Hence, taking into account \eqref{algeq_ic}, we obtain that the second component of the parameter $\tilde{\mu}$ is uniquely determined by
\begin{equation}\label{mu2}
    \tilde{\mu}_2=-\sum\limits_{i=0}^{\nu-1}N^i\tilde{f}_2^{(i)}(0).
\end{equation}
                                                            
Let us now turn to the boundary value problem with parameter \eqref{difeq}-\eqref{BC_trans}.  The second component $(\tilde{\mu}_2,\tilde{u}_2(t))$ of its solution $(\tilde{\mu},\tilde{u}(t))$ is already known from \eqref{u_alg} and \eqref{mu2}. So we are interested in finding only the first component $(\tilde{\mu}_1,\tilde{u}_1(t))$ with $\tilde{\mu}_1$ and $\tilde{u}_1(t)$ satisfying \eqref{u_dif}. Hence, the appropriate number of imposed boundary conditions must coincide with the number $n_1$ of differential equations in \eqref{dae}. So, it is reasonable to assume that the matrices $\tilde{B}, \tilde{C}\in \mathbb{R}^{n \times n}$ and the right-hand side $d\in\mathbb{R}^n$ of the boundary condition \eqref{BC_trans} are of the form
\begin{equation}\label{BC0}
    \tilde{B}=\begin{bmatrix}
\tilde{B}_1 & \tilde{B}_2\\0 & 0\end{bmatrix},\quad \tilde{C}=\begin{bmatrix}
\tilde{C}_1 & \tilde{C}_2\\
0 & 0
\end{bmatrix},\quad d=\begin{bmatrix}
d_1\\0\end{bmatrix},
\end{equation}
where $\tilde{B}_1,\tilde{C}_1\in \mathbb{R}^{n_1\times n_1}$, $\tilde{B}_2,\tilde{C}_2\in \mathbb{R}^{n_1\times n_2}$, $d_1\in \mathbb{R}^{n_1}$.

Now, inserting \eqref{u_dif} into \eqref{BC_trans} and taking into account Lemma \ref{lemma_ivp_de} and \eqref{u_alg}, we obtain the following algebraic equation in $\tilde{\mu}_1$:
\begin{equation}\label{alg_syst}
    \tilde{D}\tilde{\mu}_1=\tilde{d},
\end{equation}
where 
\begin{equation*}
\tilde{D}=\tilde{B}_1+\tilde{C}_1+\tilde{C}_1 J\int\limits_{0}^{T} e^{(T-s)J}ds    
\end{equation*}
and
\begin{equation*}
   \tilde{d}=d_1-\tilde{C}_1\int\limits_{0}^{T} e^{(T-s)J}\tilde{f}_1(s) ds+\tilde{B}_2\sum\limits_{i=0}^{\nu-1}N^i\tilde{f}_2^{(i)}(0)+\tilde{C}_2\sum\limits_{i=0}^{\nu-1}N^i\tilde{f}_2^{(i)}(T).
\end{equation*}

The equation  \eqref{alg_syst} has a unique solution if the matrix $\tilde{D}$ is non-singular. Let us assume, for instance, that $J=\text{diag}(\lambda_1,\ldots,\lambda_{n_1})$. In this case ,
\begin{equation*}
\tilde{D}=\tilde{B}_1+\tilde{C}_1\text{diag}\left(1+\lambda_1 \int\limits_{0}^{T} e^{(T-s)\lambda_1}ds,\ldots,1+\lambda_{n_1} \int\limits_{0}^{T} e^{(T-s)\lambda_{n_1}}ds \right),   \end{equation*}
and  $\tilde{D}$ is non-singular if $\text{det}(\tilde{B}_1+\tilde{C}_1 e^{TJ})\neq 0$.

Suppose that the matrix $\tilde{D}$ is non-singular. Then equation \eqref{alg_syst} has the unique solution $\tilde{\mu}_1=\tilde{D}^{-1}\tilde{d}$. Substituting $\tilde{\mu_1}$ into \eqref{u_dif}, we obtain $\tilde{u}_1(t)$ and hence the first components of the unique solution $(\tilde{\mu},\tilde{u}(t))$ of the boundary value problem with parameter \eqref{difeq}-\eqref{BC_trans}: 
\begin{equation}\label{sol_1}
    (\tilde{\mu}_1,\tilde{u}_1(t))=
    \Big(\tilde{D}^{-1}\tilde{d},~~ \int\limits_{0}^t e^{(t-s)J}dsJ\tilde{D}^{-1}\tilde{d}+\int\limits_{0}^t e^{(t-s)J}\tilde{f}_1(s) ds\Big).
\end{equation}

As previously stated, the second components of $(\tilde{\mu},\tilde{u}(t))$ are determined by \eqref{mu2} and \eqref{u_alg}:
\begin{equation}\label{sol_2}
    (\tilde{\mu}_2,\tilde{u}_2(t))=
    \Big(-\sum\limits_{i=0}^{\nu-1}N^i\tilde{f}_2^{(i)}(0),~~ -\sum\limits_{i=0}^{\nu-1}N^i[\tilde{f}_2^{(i)}(t)-\tilde{f}_2^{(i)}(0)]\Big).
\end{equation}

Thus, taking into account the equivalence of the original problem \eqref{dae}, \eqref{BC} and problems with parameter \eqref{ivp1}-\eqref{BC_u} and \eqref{difeq}-\eqref{BC_trans}, we can summarize our results.

\begin{theorem}\label{th1}
Let $(E,A)$ be a regular pair of square matrices and let $P$ and $Q$ be nonsingular matrices which transform \eqref{dae} to Weierstrass canonical form \eqref{WCF}. Furthermore, let $\nu=\textnormal{ind}(E,A)$ and $f\in C^{\nu}([0,T],\mathbb{R}^n)$.
Then the boundary value problem with parameter \eqref{difeq}-\eqref{BC_trans}, where $B=\tilde{B}Q^{-1}$, $C=\tilde{C}Q^{-1}$, and $d=(d_1,0)^T$, has a unique solution  if and only if:
\begin{enumerate}
    \item  the matrix $$\tilde{D}=\tilde{B}_1+\tilde{C}_1+\tilde{C}_1 J\int\limits_0^T e^{(T-s)J}ds$$ is non-singular;
    \item $    \tilde{\mu}_2=-\sum\limits_{i=0}^{\nu-1}N^i\tilde{f}_2^{(i)}(0).$
      
\end{enumerate}
 
\end{theorem}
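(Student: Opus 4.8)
The statement is essentially a summary of the constructions carried out above, so the plan is to prove the biconditional by assembling the explicit solution formulas and invoking the equivalence results already established. First I would use the first (equivalence) lemma of this section together with the Weierstrass transformation \eqref{WCF} to replace problem \eqref{dae}, \eqref{BC} by the decoupled parametrized problem \eqref{difeq}-\eqref{BC_trans}, in which the differential block $(\tilde\mu_1,\tilde u_1)$ and the algebraic block $(\tilde\mu_2,\tilde u_2)$ are completely separated and interact only through the boundary condition \eqref{BC_trans}. The whole argument then reduces to tracking how each block is determined.

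For the sufficiency direction, assume (1) and (2). Lemma \ref{lemma_ivp_de} expresses $\tilde u_1$ through $\tilde\mu_1$ via \eqref{u_dif}; substituting this into \eqref{BC_trans} and using the block form \eqref{BC0} collapses the boundary condition to the single linear system \eqref{alg_syst}, $\tilde D\tilde\mu_1=\tilde d$. Non-singularity of $\tilde D$ (condition (1)) yields the unique $\tilde\mu_1=\tilde D^{-1}\tilde d$, and hence the unique pair \eqref{sol_1}. On the algebraic side, Lemma 2.8 of \cite{Kunkel:2006} gives $\tilde u_2$ uniquely as \eqref{u_alg} once $\tilde\mu_2$ is fixed, and condition (2) fixes $\tilde\mu_2$ by \eqref{mu2}, producing \eqref{sol_2}. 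I would then verify directly that the assembled pair $(\tilde\mu,\tilde u)$ solves \eqref{difeq}-\eqref{BC_trans}: equations \eqref{difeq}, \eqref{difeq_ic} hold by Lemma \ref{lemma_ivp_de}, equations \eqref{algeq}, \eqref{algeq_ic} hold because the choice \eqref{mu2} forces $\tilde u_2(0)=0$, and \eqref{BC_trans} holds because \eqref{alg_syst} was obtained precisely by inserting \eqref{u_dif}, \eqref{u_alg}, \eqref{mu2} into it. Finally, the equivalence lemma transports this unique pair back to a unique solution $x=\mu+u$ of \eqref{dae}, \eqref{BC}.

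For the necessity direction, suppose the parametrized problem has a unique solution. Any solution must have its components of the forms \eqref{u_dif} and \eqref{u_alg} dictated by Lemma \ref{lemma_ivp_de} and Lemma 2.8 of \cite{Kunkel:2006}, so inserting them into \eqref{BC_trans} shows that $\tilde\mu_1$ necessarily satisfies \eqref{alg_syst}. If $\tilde D$ were singular, \eqref{alg_syst} would admit either no solution or a whole affine family of solutions $\tilde\mu_1$, each generating a distinct pair through \eqref{u_dif}; this contradicts existence and uniqueness, so $\tilde D$ must be non-singular, giving (1). Likewise, uniqueness forces $\tilde\mu_2$ to take a single value, and combining the algebraic equation \eqref{algeq} with its initial condition \eqref{algeq_ic} singles out \eqref{mu2}, i.e. (2). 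Throughout, the hypothesis $f\in C^{\nu}([0,T],\mathbb{R}^n)$ is used to guarantee that the derivatives $\tilde f_2^{(i)}$, $0\le i\le\nu-1$, exist and that the resulting $\tilde u_2$ in \eqref{u_alg} is genuinely $C^1$, so that the constructed objects live in the stated function spaces.

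The step I expect to be the most delicate is the determination of $\tilde\mu_2$ underlying condition (2). The initial condition \eqref{algeq_ic} only prescribes $N\tilde u_2(0)=0$ and therefore constrains nothing more than $N\tilde\mu_2$, leaving a freedom of dimension $\dim\ker N$ in $\tilde\mu_2$. What actually removes this freedom is the requirement that the full pair $(\tilde\mu,\tilde u)$ be uniquely determined, which amounts to imposing $\tilde u_2(0)=0$ and hence \eqref{mu2}. Making this normalization explicit, and checking that it is simultaneously necessary and sufficient for uniqueness of the algebraic block while remaining consistent with \eqref{algeq_ic}, is the one place where the argument must be spelled out with care rather than read off directly from the preceding formulas.
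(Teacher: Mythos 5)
Your proposal is correct and follows essentially the same route as the paper: Theorem \ref{th1} is stated there as a summary of the preceding construction (the equivalence lemma, the Weierstrass decoupling, Lemma \ref{lemma_ivp_de} together with Lemma 2.8 of \cite{Kunkel:2006}, the reduction of \eqref{BC_trans} under the block structure \eqref{BC0} to the linear system \eqref{alg_syst}, and invertibility of $\tilde{D}$), and your sufficiency and necessity arguments assemble exactly these ingredients. The one point where you genuinely depart from the paper is the point you flag yourself: the determination of $\tilde{\mu}_2$. The paper asserts that, ``taking into account \eqref{algeq_ic},'' the parameter $\tilde{\mu}_2$ is \emph{uniquely} determined by \eqref{mu2}; but since $\tilde{u}_2(0)=-\tilde{\mu}_2-\sum_{i=0}^{\nu-1}N^i\tilde{f}_2^{(i)}(0)$ by \eqref{u_alg}, the condition $N\tilde{u}_2(0)=0$ pins $\tilde{\mu}_2$ only modulo $\ker N$, which is nontrivial whenever $n_2\ge 1$ because $N$ is nilpotent. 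Indeed, for any $k\in\ker N$ the shift $(\tilde{\mu}_2,\tilde{u}_2)\mapsto(\tilde{\mu}_2+k,\tilde{u}_2-k)$ preserves \eqref{algeq} and \eqref{algeq_ic}, and also preserves \eqref{BC_trans}, since the boundary condition only sees the sum $\tilde{\mu}+\tilde{u}$; all such shifted pairs project to the same $x=Q(\tilde{\mu}+\tilde{u})$, so the original problem \eqref{dae}, \eqref{BC} is unaffected, but literal uniqueness of the \emph{pair} requires a normalization. Your choice $\tilde{u}_2(0)=0$, which is equivalent to condition (ii) via \eqref{u_alg} and corresponds to selecting $\mu=x(0)$ among all $\mu$ with $E\mu=Ex(0)$, is exactly the right selection rule. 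So your final paragraph is not merely a caution: it repairs a step the paper glosses over, and making that normalization explicit is what makes the ``only if'' half of condition (ii) rigorous.
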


The unique solution $(\tilde{\mu}, \tilde{u}(t)=\begin{bmatrix}
(\tilde{\mu}_1,\tilde{u}_1(t))\\(\tilde{\mu}_2,\tilde{u}_2(t))\end{bmatrix}$    of the boundary value problem with parameter \eqref{difeq}-\eqref{BC_trans} is determined by \eqref{sol_1} and \eqref{sol_2}.

\begin{theorem}
Under the assumptions of Theorem \ref{th1} the boundary value problem \eqref{dae}, \eqref{BC} with $B=\tilde{B}Q^{-1}$, $C=\tilde{C}Q^{-1}$, and $d=(d_1,0)^T$ has a unique solution 
\begin{equation*}
x(t)=Q(\tilde{\mu}+\tilde{u}(t)).    
\end{equation*}
\end{theorem}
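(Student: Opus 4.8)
The plan is to obtain the statement directly from Theorem \ref{th1} by composing the two equivalences already available, so that no new analysis is required; the real work is only to make the passage between the three formulations precise and to handle the fact that the parameter is fixed by $x$ only modulo $\ker E$. The key preliminary observation is that the change of variables $\mu = Q\tilde{\mu}$, $u = Q\tilde{u}$ is a bijection between the solutions of \eqref{ivp1}--\eqref{BC_u} and of \eqref{difeq}--\eqref{BC_trans}: left-multiplying \eqref{ivp1}, \eqref{ivp2} by $P$ and using \eqref{WCF} produces \eqref{difeq}, \eqref{difeq_ic}, \eqref{algeq}, \eqref{algeq_ic}, while \eqref{BC_u} becomes \eqref{BC_trans} since $\tilde{B} = BQ$, $\tilde{C} = CQ$; invertibility of $Q$ makes this correspondence one-to-one and sends $\mu + u$ to $Q(\tilde{\mu} + \tilde{u})$. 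Composed with the equivalence lemma (Lemma~2.1) relating \eqref{dae}, \eqref{BC} to \eqref{ivp1}--\eqref{BC_u}, this ties solutions of the original problem to those of the parameterized one.

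For existence I would take the solution $(\tilde{\mu}, \tilde{u})$ of \eqref{difeq}--\eqref{BC_trans} guaranteed by Theorem \ref{th1} and displayed in \eqref{sol_1}, \eqref{sol_2}, put $(\mu, u) := (Q\tilde{\mu}, Q\tilde{u})$, and reverse the correspondence: $(\mu, u)$ solves \eqref{ivp1}--\eqref{BC_u}, and the converse half of Lemma~2.1 then yields that $x := \mu + u = Q(\tilde{\mu} + \tilde{u})$ is a solution of \eqref{dae}, \eqref{BC}.

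Uniqueness is the delicate part, and I expect it to be the main obstacle. For a solution $x$ of \eqref{dae}, \eqref{BC}, the forward half of Lemma~2.1 only requires some $\mu$ with $E\mu = Ex(0)$, which is determined merely up to $\ker E$ (replacing $\mu$ by $\mu + k$, $k \in \ker E$, and $u$ by $u - k$ gives another admissible pair), so the induced $(\tilde{\mu}, \tilde{u})$ is not literally unique. I would settle this by noting that $Q^{-1}(\ker E) = \{0\} \times \ker N$, so the ambiguity sits only in $\tilde{\mu}_2$ within $\ker N$; it does not touch $\tilde{\mu}_1$, and it leaves both blocks of $\tilde{\mu} + \tilde{u}$ fixed, because \eqref{u_alg} gives $\tilde{\mu}_2 + \tilde{u}_2(t) = -\sum_{i=0}^{\nu-1} N^i \tilde{f}_2^{(i)}(t)$ independently of the chosen $\tilde{\mu}_2$. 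Since $\tilde{\mu}_1$ is pinned down by \eqref{alg_syst} once the structure \eqref{BC0} is inserted and $\tilde{D}$ is non-singular, the function $\tilde{\mu} + \tilde{u}$, and hence $x = Q(\tilde{\mu} + \tilde{u})$, is uniquely determined, so any solution of \eqref{dae}, \eqref{BC} agrees with the one built in the existence step. The crux is precisely this verification that the non-uniqueness of the parameter splitting is confined to $\ker E$ and cancels in the reconstruction $x = \mu + u$.
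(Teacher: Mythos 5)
Your proposal is correct and takes essentially the same route as the paper: the paper states this theorem without a separate proof, obtaining it exactly as you do by composing Lemma 2.1 with the Weierstrass change of variables $(\mu,u)=(Q\tilde{\mu},Q\tilde{u})$ and invoking Theorem \ref{th1} together with the explicit formulas \eqref{sol_1}, \eqref{sol_2}. The one point where you go beyond the text is the uniqueness step: the paper tacitly treats the pair $(\mu,u)$ as determined by $x$, whereas you correctly note that $\mu$ is fixed only modulo $\ker E$, that $Q^{-1}(\ker E)=\{0\}\times\ker N$ so the ambiguity lives entirely in $\tilde{\mu}_2$, and that it cancels in $\tilde{\mu}_2+\tilde{u}_2(t)=-\sum_{i=0}^{\nu-1}N^i\tilde{f}_2^{(i)}(t)$ by \eqref{u_alg}, while $\tilde{\mu}_1$ is pinned down by \eqref{alg_syst} with $\tilde{D}$ non-singular; this closes a gap the paper leaves implicit (its condition (ii) in Theorem \ref{th1} is really a normalization of $\tilde{\mu}_2$ within $\ker N$) rather than deviating from its method.
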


\begin{remark}
    We can also apply our method to initial value problems for linear differential-algebraic equations with constant coefficients. Indeed, if in the boundary condition \eqref{BC} we replace $B$ and $C$ by the identity matrix $I$ and zero matrix of order $n$, respectively, we get an initial condition 
\begin{equation}\label{ivp_ic}x(0)=d,\quad d\in\mathbb{R}^n.
\end{equation} 

Then, the parameter $\tilde{\mu}$ defined as $\tilde{\mu}=Q^{-1}\mu$, where $E\mu=Ex(0)$, satisfies the equation 
\begin{equation*}
    Q\tilde{\mu}=d.
\end{equation*}
The matrix $Q$ is non-singular by assumption, so the initial value problem \eqref{dae},\eqref{ivp_ic} has a unique solution whenever       
\begin{equation*}
    d_2=\sum\limits_{i=0}^{\nu-1}N^i\tilde{f}_2^{(i)}(0).
\end{equation*}
\end{remark}

\begin{remark}
We apply the method of parameterization to problem \eqref{dae}, \eqref{BC} under assumption that the matrix pair $(E,A)$ is regular. The matrix $E$, as well as the matrix $A$, can be either singular or non-singular. Note, however, that the proposed method is not applicable if $E=O$ (when equation \eqref{dae} is purely algebraic). This is due to the choice of the parameter $\mu$: $E\mu=Ex(0)$. 
\end{remark}

\bibliographystyle{amsplain}
%\bibliography{sample}

\end{document}